\documentclass[a4paper,11pt]{article}
\usepackage{titling}
\usepackage[english]{babel}
\usepackage[utf8]{inputenc}
\usepackage{graphicx}
\usepackage{amsmath,amssymb,amsfonts,amsthm}
\usepackage{geometry}
\usepackage{float}
\usepackage{enumitem}
\usepackage{caption}
\title{
	The Significance of Proposition~II
	in Galois' \textit{Mémoire}\\[1ex]
	\large The Origin of Galois Automorphisms
}
\author{
	Math Dicker\\
	\small louis.dicker@ziggo.nl
}
\date{\today}
\newtheorem{proposition}{Proposition}
\newtheorem{lemma}{Lemma}

\begin{document}
	
	\maketitle

	\begin{abstract}
		
		In Proposition~II of his manuscript, Galois writes the well-known remark:
		\textbf{\textit{Il y a quelque chose à compléter dans cette démonstration. Je n’ai pas le temps}.}
		Although Galois did not complete the proof, it is possible to reconstruct the essential content of Proposition~II and to supply the missing arguments. As usual, \(V\) denotes the linear form \(ax_1+bx_2+cx_3+\dots\), where \(x_1,x_2,x_3,\dots\) are distinct roots of a separable polynomial. Let \(L\) be the corresponding splitting field over a ground field of characteristic \(0\). On the one hand, Proposition~II concerns the factorization of the minimal polynomial \(g(x)\) of \(V\) over an intermediate field \(M\subseteq L\); on the other hand, it concerns the factorization of the same polynomial into irreducible factors whose coefficients belong to the intermediate fields conjugate to \(M\). It is precisely this latter aspect that constitutes the central theme of Proposition~II.
		
		The notions of \textit{groupe de permutations} and \textit{groupe de substitutions} are of fundamental importance in the \textit{Mémoire}. We provide a characterization of these notions in modern terminology. Associated with every \textit{groupe de permutations} is a polynomial whose coefficients are invariant under the corresponding \textit{groupe de substitutions}. Moreover, a \textit{groupe de permutations} determines a partition of the Galois group, making it possible to factor the minimal polynomial \(g(x)\) into factors whose coefficients belong to the intermediate fields corresponding to the associated \textit{groupes de substitutions}.
		
		Finally, we prove that the substitutions of the Galois group are field automorphisms of the splitting field \(L\). This establishes the connection between Galois' original formulation and the modern formulation of Galois theory.
		
		\begin{figure}[htbp]
			\centering
			\includegraphics[width=0.8\textwidth]{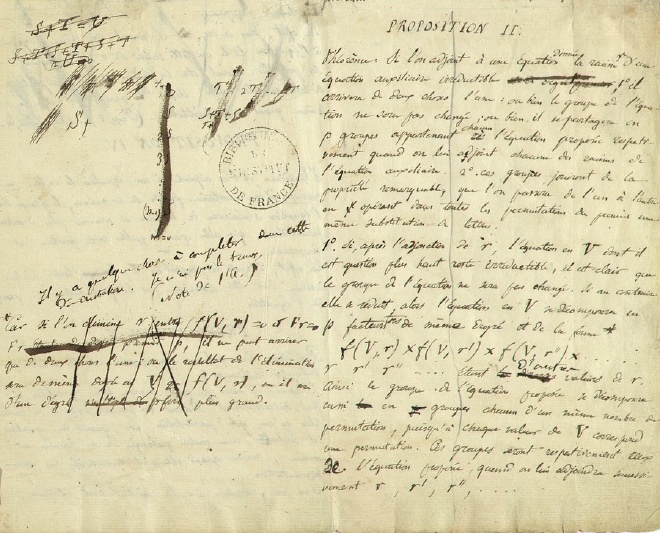}
			\caption*{Left margin: \textit{Il y a quelque chose à compléter dans cette démonstration. Je n’ai pas le temps}.}
			\label{fig:plaatje}
		\end{figure}
		
	\end{abstract}

\section{Introduction}

Proposition~II of the \textit{Mémoire} does not immediately reveal its true significance. In the manuscript, Galois remarks: \textit{Il y a quelque chose à compléter dans cette démonstration. Je n'ai pas le temps.} This remark most likely refers not only to the incompleteness of the proof, but also to the absence of a more detailed explanation of the underlying purpose of Proposition~II. The formula appearing somewhat beyond the middle of the manuscript shows that Proposition~II is essentially concerned with a factorization process. The aim of this article is to demonstrate how this factorization should be understood. Readers who wish to gain a concrete understanding of the underlying ideas are advised to study the detailed example in Section~\ref{sec:voorbeeld} before reading the general proof.

Two notions play a central role throughout the paper: \textit{groupe de permutations} and \textit{groupe de substitutions}. We show that, in Galois' terminology, a \textit{groupe de permutations} corresponds to either a left or a right coset of a group. Consequently, such a set is not necessarily a group in the modern sense, although this is the case in Proposition~I. A \textit{groupe de substitutions}, on the other hand, is always a group. It may be helpful to become familiar with these notions by first considering concrete examples in, for instance, \(S_4\).

Galois speaks almost exclusively of substitutions rather than automorphisms. Adolf Hurwitz adopts the same point of view in his 1890 lectures on Galois theory delivered in Königsberg \cite{Hurwitz1}. At the end of this article, we show that the substitutions occurring in Proposition~I correspond precisely to the automorphisms by means of which Galois theory is formulated today. This establishes a connection between the classical and the modern approaches. For a comprehensive treatment of substitution theory, the reader is referred to Eugen Netto's German or English monograph \cite{netto}. Finally, in Lemma~III Galois proves the existence of certain functions in \(\mathbb{Q}(x)\) that are essential for Proposition~I. We present a direct proof of this result based on an argument closely related to Lagrange interpolation. Hurwitz proves the same result by a different method; nevertheless, both approaches lead to the same functions.

\section{The Galois group}

In Proposition~I, Galois speaks of a \textit{groupe de permutations} and a \textit{groupe de substitutions}; these notions have a specific meaning in Galois' work; see Neumann \cite[pp.~20--23]{neumann}. We shall discuss these concepts in Section~\ref{sec:ps}. The set of permutations of the roots considered by Galois is

\[
\left(
\begin{array}{cccc}
	\theta_1(V_i)&\theta_2(V_i)&\theta_3(V_i)&\theta_4(V_i)
\end{array}
\right),
\]

where \(1 \le i \le m\). Associated with this set of permutations are the sets of substitutions

\[
\left(
\begin{array}{cccc}
	\theta_1(V_i)&\theta_2(V_i)&\theta_3(V_i)&\theta_4(V_i)\\
	\theta_1(V_j)&\theta_2(V_j)&\theta_3(V_j)&\theta_4(V_j)
\end{array}
\right),
\]

where \(1 \le i,j \le m\) and \(i\) is fixed. In principle, this construction yields \(m\) sets, each consisting of \(m\) substitutions. A remarkable fact, already known to Galois, is that each of these \(m\) sets of substitutions coincides with the set

\[
\left(
\begin{array}{cccc}
	\theta_1(V_1)&\theta_2(V_1)&\theta_3(V_1)&\theta_4(V_1)\\
	\theta_1(V_j)&\theta_2(V_j)&\theta_3(V_j)&\theta_4(V_j)
\end{array}
\right),
\]

where \(1 \le j \le m\). It is this property of the set of permutations, which Galois calls a \textit{groupe de permutations}, that implies that the associated set of substitutions is \textbf{closed under composition}, and therefore forms what Galois calls a \textit{groupe de substitutions}. The meaning of the functions \(\theta_1,\theta_2,\theta_3,\theta_4\) will be explained in Section~\ref{sec:ct}. We shall return to all of these notions and their interrelations in the subsequent sections.

\section{On permutations and substitutions}
\label{sec:ps}

\subsection{The notions of \textit{groupe de permutations} and \textit{groupe de substitutions}}

From the principles stated at the beginning of the \textit{Mémoire}, one can infer what Galois means by the notions of \textit{groupe de permutations} and \textit{groupe de substitutions}. Without loss of generality, we consider permutations of the symbols \(1,2,3,\) and \(4\), whereas Galois considers permutations of letters.

A collection of permutations \(p_1,p_2,\dots,p_k\) is called a \textit{groupe de permutations} if it satisfies the following condition.

\bigskip

\noindent
\textbf{Definition.}

A collection of permutations is a \textit{groupe de permutations} if, for every \(1 \le i \le k\), the set of substitutions

\[
\left(
\begin{array}{c}
	p_1\\
	p_1
\end{array}
\right),
\left(
\begin{array}{c}
	p_1\\
	p_2
\end{array}
\right),
\dots,
\left(
\begin{array}{c}
	p_1\\
	p_k
\end{array}
\right)
\]

coincides with the set of substitutions

\[
\left(
\begin{array}{c}
	p_i\\
	p_1
\end{array}
\right),
\left(
\begin{array}{c}
	p_i\\
	p_2
\end{array}
\right),
\dots,
\left(
\begin{array}{c}
	p_i\\
	p_k
\end{array}
\right).
\]

An \textbf{important property} of a substitution is that it remains invariant when both permutations occurring in it are multiplied on the right by the same permutation. Indeed, right multiplication permutes the symbols appearing in both original permutations in exactly the same way, and consequently the substitution itself remains unchanged. This invariance does not, in general, hold under left multiplication.

In symbolic form, for an arbitrary permutation \(x\)

\[
\boxed{
	\left(
	\begin{array}{c}
		p_i\\
		p_j
	\end{array}
	\right)
	\equiv
	\left(
	\begin{array}{c}
		p_ix\\
		p_jx
	\end{array}
	\right)
}
\]\subsection{Characterization of a \textit{groupe de permutations}}

A \textit{groupe de permutations} admits the following characterization.

\begin{lemma}
	\label{lem:een}
	A set of permutations \(P=\{p_1,p_2,\dots,p_k\}\) is a \textbf{\textit{groupe de permutations}} if and only if \(P\) is a \textbf{right coset of a group}, if and only if \(P\) is a \textbf{left coset of a group}. In the former case, if \(P\) is a right coset of a group \(H\), then \(H\) is the corresponding \textbf{\textit{groupe de substitutions}}. In the latter case, if \(P\) is a left coset \(aH\), then \(aHa^{-1}\) is the corresponding \textbf{\textit{groupe de substitutions}}.
\end{lemma}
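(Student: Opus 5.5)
The plan is to translate the definition into group-theoretic language by fixing a convention for substitutions, and then to verify each equivalence by a short coset computation. I will write permutations as elements of the symmetric group and read the substitution \(\left(\begin{array}{c} p_i\\ p_j \end{array}\right)\) as the unique permutation \(s\) carrying the arrangement \(p_i\) to the arrangement \(p_j\). With composition chosen to fit the paper, this is \(s=p_jp_i^{-1}\). The first thing to check is that this convention agrees with the boxed invariance: indeed \((p_jx)(p_ix)^{-1}=p_jp_i^{-1}\). Fixing this convention compatibly with the boxed identity is the step I expect to need the most care, since everything else is mechanical. Under it, the defining condition of a \textit{groupe de permutations} becomes
\[
Pp_1^{-1}=Pp_i^{-1}\qquad\text{for all } 1\le i\le k .
\]

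Assume this condition holds, and put \(H=Pp_1^{-1}\). Then \(P=Hp_1\), so it suffices to show that \(H\) is a group.
\begin{enumerate}
\item \(H\) contains the identity \(p_1p_1^{-1}\).
\item Take \(h=p_ip_1^{-1}\in H\). Using the hypothesis \(H=Pp_i^{-1}\), we get \(Hh=Pp_i^{-1}p_ip_1^{-1}=Pp_1^{-1}=H\). So \(H\) is closed under multiplication.
\item A finite nonempty subset of a group that is closed under multiplication is a subgroup.
\end{enumerate}
Hence \(P\) is a right coset of the group \(H\). By construction \(H=Pp_1^{-1}\) is exactly the set of substitutions \(\left(\begin{array}{c} p_1\\ p_j \end{array}\right)\), so \(H\) is the \textit{groupe de substitutions}.

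Conversely, let \(P=Ha\) with \(H\) a group, and write \(p_i=h_ia\). Then
\[
Pp_i^{-1}=Haa^{-1}h_i^{-1}=Hh_i^{-1}=H,
\]
which is independent of \(i\). So \(P\) satisfies the definition, and its \textit{groupe de substitutions} is \(H\).

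For left cosets, use the identity \(aH=(aHa^{-1})a\), where \(aHa^{-1}\) is again a group. This identity shows that every left coset is a right coset and vice versa, since \(Ha=a(a^{-1}Ha)\). Therefore all three conditions in the lemma are equivalent. By the right-coset case, the \textit{groupe de substitutions} of \(P=aH\) is \(aHa^{-1}\).
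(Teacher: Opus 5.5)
Your proof is correct and is essentially the paper's argument. You set $H=Pp_1^{-1}$, show it is closed under composition, use invariance under right multiplication for the converse, and handle left cosets via $aH=(aHa^{-1})a$; the only difference is that you encode $\left(\begin{array}{c}p_i\\p_j\end{array}\right)$ as $p_jp_i^{-1}$ and prove closure in one step via $Hh=H$, whereas the paper first treats the case $1\in P$ and then reduces to it using $\left(\begin{array}{c}p_i\\p_j\end{array}\right)=\left(\begin{array}{c}1\\p_jp_i^{-1}\end{array}\right)$.
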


\begin{proof}[\normalfont\bfseries Proof]
	
	The last two equivalences are immediate. Indeed, if \(H\) is a group and \(a\) is a permutation, then
	\[
	aH=(aHa^{-1})a
	\]
	and
	\[
	Ha=a(a^{-1}Ha).
	\]
	
	First suppose that \(P\) contains the identity permutation \(1\). We then show that
	
	\[
	\{\text{\(P\) is a \textit{groupe de permutations} containing the identity}\}
	\Longleftrightarrow
	\{\text{\(P\) is a group}\}.
	\]
	
	Indeed, let \(x,y\in P\). Since \(P\) is a \textit{groupe de permutations}, there exists \(z\in P\) such that
	
	\[
	\left(
	\begin{array}{c}
		1\\
		x
	\end{array}
	\right)
	=
	\left(
	\begin{array}{c}
		y\\
		z
	\end{array}
	\right)
	=
	\left(
	\begin{array}{c}
		1\\
		zy^{-1}
	\end{array}
	\right).
	\]
	
	Hence \(x=zy^{-1}\), and therefore \(xy=z\in P\). Thus \(P\) is closed under composition.
	
	Conversely,
	
	\[
	\left(
	\begin{array}{c}
		p_i\\
		p_j
	\end{array}
	\right)
	=
	\left(
	\begin{array}{c}
		1\\
		p_jp_i^{-1}
	\end{array}
	\right),
	\]
	
	and since \(P\) is a group, \(p_jp_i^{-1}\in P\). Consequently, the defining condition of a \textit{groupe de permutations} is satisfied.
	
	Now suppose that \(P\) does not contain the identity permutation. Consider the set \(Pp_1^{-1}\). This is again a \textit{groupe de permutations}, but now containing the identity. By the previous argument it is therefore a group, say \(H\). Consequently,
	
	\[
	P=Hp_1,
	\]
	
	so that \(P\) is a right coset of \(H\).
	
	Conversely, suppose that \(P=Hp_1\) is a right coset of the group
	\(H=\{h_1,h_2,\dots,h_r\}\). Then
	
	\[
	\left(
	\begin{array}{c}
		h_ip_1\\
		h_jp_1
	\end{array}
	\right)
	=
	\left(
	\begin{array}{c}
		h_i\\
		h_j
	\end{array}
	\right),
	\]
	
	showing that \(Hp_1\) is a \textit{groupe de permutations}.
	
	Finally, suppose that \(P=Ha\) is a right coset of \(H\). Then
	
	\[
	\left(
	\begin{array}{c}
		h_ia\\
		h_ja
	\end{array}
	\right)
	=
	\left(
	\begin{array}{c}
		h_i\\
		h_j
	\end{array}
	\right),
	\]
	
	so that the associated substitutions are precisely those arising from \(H\).
	
	If, on the other hand, \(P=aH\) is a left coset of \(H\), then
	
	\[
	\left(
	\begin{array}{c}
		ah_i\\
		ah_j
	\end{array}
	\right)
	=
	\left(
	\begin{array}{c}
		ah_ia^{-1}\\
		ah_ja^{-1}
	\end{array}
	\right),
	\]
	
	and hence the associated substitutions are exactly those arising from the conjugate group \(aHa^{-1}\).
	
\end{proof}

We also record the following immediate consequence.

\begin{lemma}
	\label{lem:twee}
	A set of permutations \(P=\{p_1,p_2,\dots,p_k\}\) containing the identity permutation is a \textbf{\textit{groupe de permutations}} if and only if \(P\) is a group.
\end{lemma}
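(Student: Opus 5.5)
This lemma is essentially the special case already handled in the first half of the proof of Lemma~\ref{lem:een}, so the plan is to extract that argument and make both directions self-contained. Two tools are used throughout: the boxed invariance property of substitutions under right multiplication, and the convention that the substitution \(\left(\begin{array}{c}1\\ x\end{array}\right)\) is identified with the permutation \(x\). With these, \(\left(\begin{array}{c}p_i\\ p_j\end{array}\right)=\left(\begin{array}{c}1\\ p_jp_i^{-1}\end{array}\right)\), obtained by right multiplication with \(p_i^{-1}\).

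For the forward direction, assume \(P\) is a \textit{groupe de permutations} with \(1\in P\), and take \(p_1=1\) as the reference permutation in the definition. Given \(x,y\in P\), the substitution \(\left(\begin{array}{c}1\\ x\end{array}\right)\) lies in the first list. The definition, applied with \(p_i=y\), says this list equals \(\{\left(\begin{array}{c}y\\ z\end{array}\right): z\in P\}\). So there is \(z\in P\) with \(\left(\begin{array}{c}1\\ x\end{array}\right)=\left(\begin{array}{c}y\\ z\end{array}\right)=\left(\begin{array}{c}1\\ zy^{-1}\end{array}\right)\). This gives \(x=zy^{-1}\), hence \(xy=z\in P\), so \(P\) is closed under composition. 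Because \(P\) is a finite subset of the symmetric group that contains \(1\) and is closed under composition, it is a group: inverses arise as powers of each element.

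For the converse, assume \(P\) is a group. By the displayed identity, every substitution \(\left(\begin{array}{c}p_i\\ p_j\end{array}\right)\) equals \(\left(\begin{array}{c}1\\ p_jp_i^{-1}\end{array}\right)\). For fixed \(i\), the map \(p_j\mapsto p_jp_i^{-1}\) is a bijection of \(P\), since right multiplication by a group element is a bijection. Hence the set of substitutions with top row \(p_i\) coincides with \(\{\left(\begin{array}{c}1\\ q\end{array}\right):q\in P\}\), and this is exactly the set with top row \(p_1=1\). This is precisely the defining condition of a \textit{groupe de permutations}.

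I expect no step to be a real obstacle; the proof is a careful specialization of Lemma~\ref{lem:een}. The only delicate point is bookkeeping. One must use right multiplication, never left, when normalizing a substitution, since only right multiplication leaves a substitution invariant. One must also remember that the definition was stated relative to \(p_1\), so the identity has to be placed in the reference position \(p_1\).
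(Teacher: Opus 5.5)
Your proof is correct and is essentially the paper's own argument: Lemma~\ref{lem:twee} is recorded there as the identity-containing case of the proof of Lemma~\ref{lem:een}, where substitutions are normalized by right multiplication to obtain \(xy=z\in P\) in one direction and \(p_jp_i^{-1}\in P\) in the other. You also make explicit two points the paper leaves tacit: a finite set containing \(1\) and closed under composition has inverses, and right multiplication by \(p_i^{-1}\) is a bijection of \(P\), so the two sets of substitutions are equal and not merely contained one in the other.
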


If \(P\) is a \textbf{\textit{groupe de permutations}}, then the corresponding set of substitutions is closed under composition and therefore forms a group. Indeed, let

\[
S=
\left(
\begin{array}{c}
	p_1\\
	p_r
\end{array}
\right),
\qquad
T=
\left(
\begin{array}{c}
	p_1\\
	p_s
\end{array}
\right),
\qquad
S^{-1}=
\left(
\begin{array}{c}
	p_1\\
	p_t
\end{array}
\right),
\]

where \(1\le r,s,t\le k\). Then

\[
\left(
\begin{array}{c}
	p_t\\
	p_s
\end{array}
\right)
=
\left(
\begin{array}{c}
	p_1\\
	p_s
\end{array}
\right)
\left(
\begin{array}{c}
	p_t\\
	p_1
\end{array}
\right)
=
T\bigl((S^{-1})^{-1}\bigr)
=
TS,
\]

which therefore also belongs to the set. We thus obtain the following result.

\begin{lemma}
	\label{lem:drie}
	The \textbf{\textit{groupe de substitutions}} associated with a \textbf{\textit{groupe de permutations}} is a group.
\end{lemma}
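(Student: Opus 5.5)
The plan is to identify the set of substitutions attached to \(P\) with an explicit set of permutations, and then read off the group property from Lemma~\ref{lem:een}. I will use the convention already fixed in the proof of Lemma~\ref{lem:een}, namely
\[
\left(
\begin{array}{c}
	p_i\\
	p_j
\end{array}
\right)
=
\left(
\begin{array}{c}
	1\\
	p_jp_i^{-1}
\end{array}
\right),
\]
which follows from the boxed invariance under right multiplication with \(x=p_i^{-1}\). Under this convention every substitution is identified with the single permutation \(p_jp_i^{-1}\), and composition of substitutions corresponds to multiplication of these permutations.

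First, by the defining condition of a \textit{groupe de permutations}, the set of substitutions does not depend on the chosen upper row. So it suffices to consider
\[
\Sigma=\left\{\left(
\begin{array}{c}
	p_1\\
	p_j
\end{array}
\right):1\le j\le k\right\},
\]
which corresponds to the set of permutations \(Pp_1^{-1}\). Second, by Lemma~\ref{lem:een}, \(P\) is a right coset \(Hp_1\) of a group \(H\). Hence \(Pp_1^{-1}=H\), and \(\Sigma\) is exactly the set of substitutions arising from \(H\), which is a group. Alternatively, one can stay inside Lemma~\ref{lem:twee}: \(Pp_1^{-1}\) is again a \textit{groupe de permutations}, since right multiplication does not change any substitution, and it contains the identity, so it is a group.

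As a direct check that avoids the coset language, I would rewrite the product computation given just before the statement. Take \(S=\binom{p_1}{p_r}\) and \(T=\binom{p_1}{p_s}\). Choose \(t\) with \(\binom{p_1}{p_t}=S^{-1}\); such a \(t\) exists because, by the defining property, the inverse \(\binom{p_r}{p_1}\) of \(S\) also lies in \(\Sigma\). Then \(TS=\binom{p_t}{p_s}\). This lies in \(\Sigma\), because the row-\(p_t\) set equals the row-\(p_1\) set. The identity is \(\binom{p_1}{p_1}\), and associativity is inherited from composition of permutations, so \(\Sigma\) is a finite group.

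The main obstacle is purely one of conventions. I have to make sure that the order of composition of substitutions matches the order of multiplication of the identified permutations \(p_jp_i^{-1}\), so that ``closed under composition'' really means closure of \(H\) under multiplication. I would fix this first by checking the identity \(\binom{b}{c}\binom{a}{b}=\binom{a}{c}\). After that, everything follows formally from Lemma~\ref{lem:een}.
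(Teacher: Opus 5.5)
Your proof is correct. The ``direct check'' in your third paragraph is exactly the paper's proof. The paper writes \(\binom{p_t}{p_s}=\binom{p_1}{p_s}\binom{p_t}{p_1}=TS\) and concludes from the defining property that this lies in the row-\(p_1\) set. You also justify why an index \(t\) with \(S^{-1}=\binom{p_1}{p_t}\) exists: \(\binom{p_r}{p_1}\) lies in the row-\(p_r\) set, which equals \(\Sigma\). The paper only asserts this.

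Your primary route is genuinely different. You observe that Lemma~\ref{lem:een} already contains the statement: it gives \(P=Hp_1\) with \(H=Pp_1^{-1}\) a group, and \(\binom{p_i}{p_j}=\binom{1}{p_jp_i^{-1}}\) then matches the substitutions with the elements of \(H\).

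That route is shorter and says more, since it names the \textit{groupe de substitutions} explicitly as \(Pp_1^{-1}\). But it depends on the coset characterization. It also needs the identification of each substitution with the single permutation \(p_jp_i^{-1}\) to respect composition. This is the convention check \(\binom{b}{c}\binom{a}{b}=\binom{a}{c}\) you rightly single out, and it does hold with the paper's right-to-left composition.

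The paper's computation is self-contained. It uses only the defining property of a \textit{groupe de permutations}, the same composition rule, and \(\binom{a}{b}^{-1}=\binom{b}{a}\). It never needs the reduction of a substitution to the form \(\binom{1}{\cdot}\).
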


It follows that the substitutions constituting the Galois group in Proposition~I form a group.

\section{Context and identification of the roots \(V_i\), permutations and coordinates}
\label{sec:ct}

We follow the edition of the \textit{Mémoire} published by Liouville in 1846. In this section we collect the results from the \textit{Mémoire} that will be used throughout the remainder of the paper.

\subsection{Context}

\begin{enumerate}
	
	\item
	We restrict our attention to the case of an irreducible polynomial of degree four. The general case is treated \emph{mutatis mutandis} in exactly the same way. Let \(f\in\mathbb{Q}[x]\) be an irreducible polynomial with four distinct roots \(x_1,x_2,x_3,x_4\), and let \(L=\mathbb{Q}(x_1,x_2,x_3,x_4)\) be its splitting field. According to Lemma~II of the \textit{Mémoire}, there exist integers \(a,b,c,\) and \(d\) such that all \(24\) permutations of \(ax_1+bx_2+cx_3+dx_4\) are distinct. We denote these \(24\) values by \(V_1,V_2,V_3,\dots,V_{24}\), where \(V_1=ax_1+bx_2+cx_3+dx_4\). Galois also denotes \(V_1\) simply by \(V\).
	
	\item
	According to Proposition~I of the \textit{Mémoire}, there exist rational functions \(\theta_1,\theta_2,\theta_3,\theta_4\in\mathbb{Q}(x)\) such that
	\[
	V_i=a\theta_1(V_i)+b\theta_2(V_i)+c\theta_3(V_i)+d\theta_4(V_i).
	\]
	
	\item
	The existence of the functions \(\theta_1,\theta_2,\theta_3,\theta_4\) implies that \(V_1\) is a primitive element of the splitting field \(L\). Consequently, for each \(V_i\) there exists a unique polynomial \(F_i\in\mathbb{Q}[x]\) whose degree is less than \([L:\mathbb{Q}]\) such that \(V_i=F_i(V_1)\).
	
	\item
	Let \(M\) be an intermediate field satisfying \(\mathbb{Q}\subseteq M\subseteq L\).
	
	\item
	We shall use the following notation. Let \(h(x)\in\mathbb{Q}[x]\) denote the polynomial whose roots are precisely the \(24\) values \(V_i\). Let \(m(x)\in M[x]\) denote the minimal polynomial of \(V_1\) over \(M\), and let \(g(x)\in\mathbb{Q}[x]\) denote the minimal polynomial of \(V_1\) over \(\mathbb{Q}\). Then \(m(x)\) divides \(g(x)\), and \(g(x)\) divides \(h(x)\). Finally, we renumber the values
	\(
	V_1,V_2,\dots,V_{24}
	\)
	so that the first \(m\) values
	\(
	V_1,V_2,\dots,V_m
	\)
	are precisely the roots of \(m(x)\).
	
\end{enumerate}

\subsection{Identification of the roots \(V_i\) with permutations and coordinates}

By Lemma~II, the \(24\) values obtained by applying the \(24\) permutations \(\alpha\beta\gamma\delta\) to \(V_1\) are pairwise distinct. Consequently, the corresponding values of
\(a\theta_\alpha(V_1)+b\theta_\beta(V_1)+c\theta_\gamma(V_1)+d\theta_\delta(V_1)\)
are likewise all distinct. The permutation \(1234\) associated with \(V_1\) is multiplied \underline{on the right} by the permutation \(\alpha\beta\gamma\delta\); throughout this paper, permutations are composed from right to left. The convention adopted by Galois himself deserves further historical investigation; we conjecture that he composed permutations from left to right. Each of the \(24\) values can therefore be associated with a unique \textbf{permutation}. We shall also refer to this permutation as the \textbf{coordinates} of the corresponding value. The permutation associated with \(V_i\) will be denoted by \(p_i\).

\section{The functions \(\phi_{\alpha\beta\gamma\delta}\), \(F_i\) and the substitution
	\(
	\left(
	\begin{array}{c}
		p_1\\
		p_i
	\end{array}
	\right)
	\)
}

\begin{enumerate}
	
	\item
	
	To every permutation \(\alpha\beta\gamma\delta\in S_4\) we associate the polynomial
	\[
	\phi_{\alpha\beta\gamma\delta}(v)
	=
	a\theta_\alpha(v)
	+b\theta_\beta(v)
	+c\theta_\gamma(v)
	+d\theta_\delta(v).
	\]
	
	Here, \(\theta_\alpha(V_i)\) denotes the coordinate of \(V_i\) in position \(\alpha\), \(\theta_\beta(V_i)\) the coordinate in position \(\beta\), and so forth. We then have
	
	\[
	\phi_{p_j}(V_i)
	=
	F_j(V_i)
	=
	V_{p_ip_j},
	\]
	
	where \(p_ip_j\) denotes the product of the permutations \(p_i\) and \(p_j\), with \(1\le i,j\le m\). Indeed, if \(V_j=F_j(V_1)\) and
	\(V_j=a\theta_\alpha(V_1)+b\theta_\beta(V_1)+c\theta_\gamma(V_1)+d\theta_\delta(V_1)\),
	where \(p_j=\alpha\beta\gamma\delta\), then Lemma~I of the \textit{Mémoire} implies that
	
	\[
	F_j(V_i)
	=
	a\theta_\alpha(V_i)
	+b\theta_\beta(V_i)
	+c\theta_\gamma(V_i)
	+d\theta_\delta(V_i),
	\]
	
	for \(i=1,2,\dots,m\). Thus, \(F_j\) transforms the coordinates of \textbf{every} \(V_i\) according to a \textbf{fixed permutation pattern} determined solely by right multiplication of \(p_i\) by \(p_j=\alpha\beta\gamma\delta\).
	
	\item
	
	The substitution	
	\(
	\begin{pmatrix}
		p_1\\
		p_j
	\end{pmatrix}
	\)
		acts by left multiplication:
		\(
	\begin{pmatrix}
		p_1\\
		p_j
	\end{pmatrix}(V_i)
	=
	V_{p_jp_i}.
	\)
	
\end{enumerate}
\newpage
 \newpage
 \section{An algebraic interpretation of Proposition~II}
 
 We begin by reproducing Liouville's transcription of Proposition~II and first explain it from an algebraic point of view. 
 
 \begin{figure}[H]
 	\centering
 	\includegraphics[width=1\textwidth]{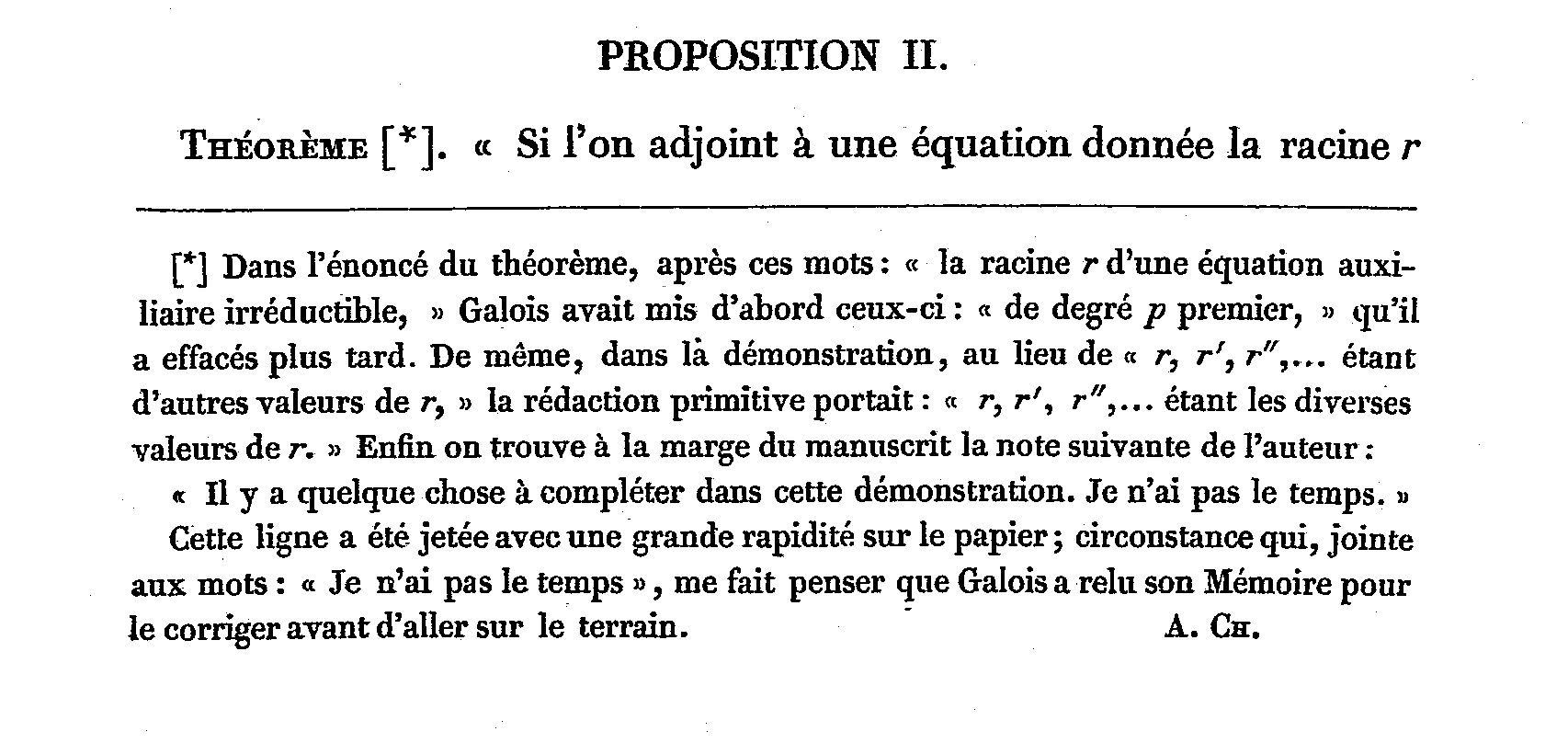}
 \end{figure}
 
 \begin{figure}[H]
 	\centering
 	\includegraphics[width=1\textwidth]{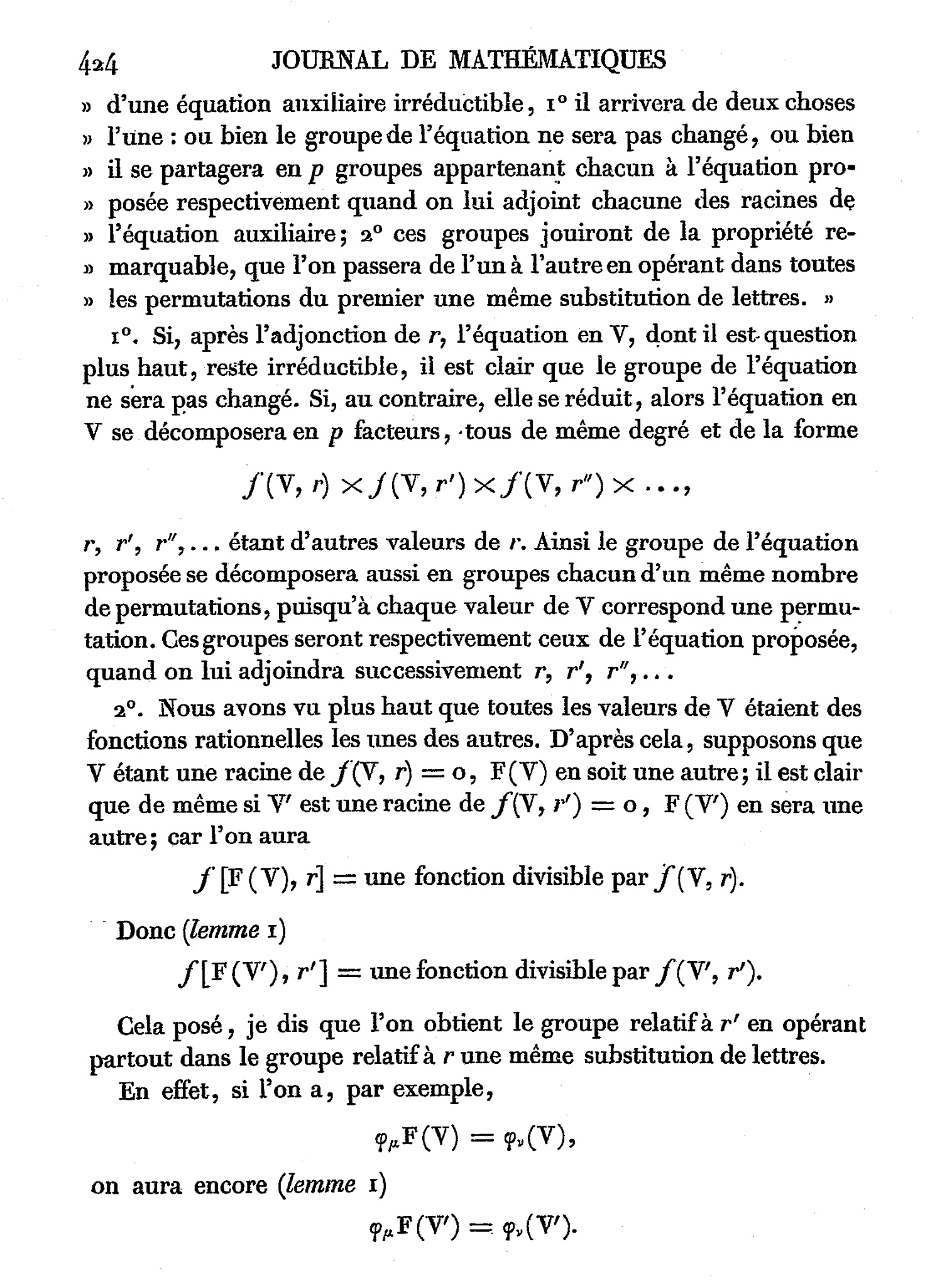}
 \end{figure}
 
 \begin{figure}[H]
 	\centering
 	\includegraphics[width=1\textwidth]{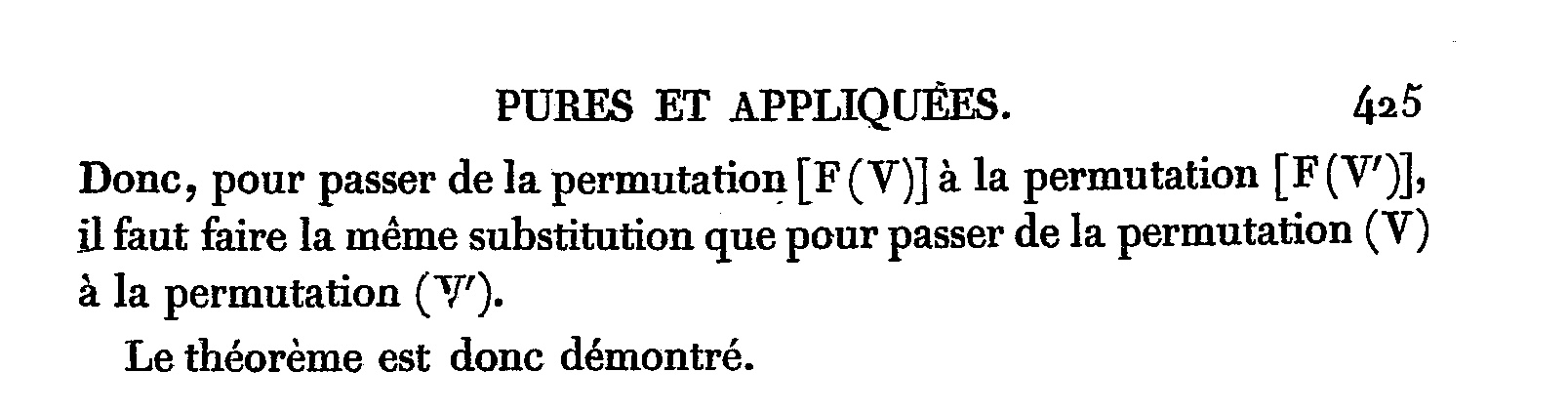}
 	\caption*{Liouville's 1846 transcription of the \textit{Mémoire}.}
 	\label{fig:plaatje}
 \end{figure}
 
 For the sake of clarity, we restrict ourselves to the following situation. The general case can be treated in exactly the same way, and the essential idea of the argument remains unchanged. We assume that the degree of the splitting field \(L\) over \(\mathbb{Q}\) is \(24\). Galois adjoins to \(\mathbb{Q}\) a root \(r\) of an irreducible polynomial \(z(x)\in\mathbb{Q}[x]\), thereby obtaining the field \(\mathbb{Q}[r]=\mathbb{Q}(r)\). We assume that \(z(x)\) has degree \(4\) with roots \(r,r',r'',r'''\). The assumption that \(z(x)\) has degree \(4\) is made only to facilitate the discussion in the next section. The same reasoning remains valid if one replaces the ground field \(\mathbb{Q}\) by an intermediate field \(\mathbb{Q}\subseteq M\subseteq L\) and considers an irreducible polynomial \(z(x)\in M[x]\) of arbitrary degree.
 
 Let \(f(v,r)\) denote the minimal polynomial of \(V\) over \(\mathbb{Q}(r)\). Since \(V\) is a primitive element of \(L\), every other root of this polynomial is of the form \(F(V)\), where \(F\in\mathbb{Q}[x]\). By Lemma~I of the \textit{Mémoire}, we therefore have
  \begin{equation}
 	\label{eq:kern}
 	f(F(v),r)=f(v,r)s(v,r).
 \end{equation}
 
 In equation~(\ref{eq:kern}), all polynomials belong to \(\mathbb{Q}(r)[x]\). The crucial step is now to apply Lemma~I of the \textit{Mémoire} to the coefficients of equation~(\ref{eq:kern}). Since the coefficients of equal powers of \(v\) on both sides of the equation coincide and belong to \(\mathbb{Q}(r)\), Lemma~I yields
  \begin{equation}
 	\label{eq:kern2}
 	f(F(v),r')=f(v,r')s(v,r').
 \end{equation}
 
 Here all polynomials belong to \(\mathbb{Q}(r')[x]\). The same argument applies to the conjugate roots \(r''\) and \(r'''\). Consequently, if \(V'\) is a root of \(f(v,r')\), then \(F(V')\) is also a root of \(f(v,r')\).
 
 We now turn to the question of how Galois' symbols \(\phi_\mu\) and \(\phi_\nu\) should be interpreted, insofar as such an interpretation is possible. Galois applies Lemma~I, indicating that these symbols represent functions. At the end of Proposition~II he further states that the two substitutions
 
 \[
 \begin{pmatrix}
 	F(V)\\
 	F(V')
 \end{pmatrix}
 \qquad\text{and}\qquad
 \begin{pmatrix}
 	V\\
 	V'
 \end{pmatrix}
 \]
 
 must be identical. Such an identity can only arise by multiplying both permutations occurring in a substitution on the right by one and the same permutation.
 
 These two observations naturally lead us to identify Galois' functions \(\phi_\mu\) and \(\phi_\nu\) with the previously introduced functions \(\phi_{\alpha\beta\gamma\delta}\), where the permutations are \(\mu\) and \(\nu\), respectively. With this interpretation, the proof proceeds naturally. Indeed, for every permutation \(\mu\) there exists a permutation \(\nu\) such that
  \[
 \phi_\mu(F(V))=\phi_\nu(V),
 \]
 
 and, by Lemma~I applied to the minimal polynomial of \(V\) over \(\mathbb{Q}\), which has degree \(24\), the same identity also holds after replacing \(V\) by \(V'\). 
 Consequently, 
 \[
 \begin{pmatrix}
 	F(V)\\
 	F(V')
 \end{pmatrix}
 =
 \begin{pmatrix}
 	\phi_\mu(F(V))\\
 	\phi_\mu(F(V'))
 \end{pmatrix}
 =
 \begin{pmatrix}
 	\phi_\nu(V)\\
 	\phi_\nu(V')
 \end{pmatrix}
 =
 \begin{pmatrix}
 	V\\
 	V'
 \end{pmatrix}.
 \]
 
 The first and the last equalities follow from the fact that the application of \(\phi_\mu\) and \(\phi_\nu\) leaves the corresponding substitutions unchanged. The final conclusion is therefore exactly the one reached by Galois.
 
 Liouville's transcription differs slightly from the original manuscript at this point. Galois uses the symbols \(p\) and \(n\) rather than permutations. Liouville also wrote an extensive commentary on Proposition~II. For a detailed discussion, the reader is referred to the book by Jesper Lützen \cite{lutzen}.
 
 \begin{figure}[H]
 	\centering
 	\includegraphics[width=1\textwidth]{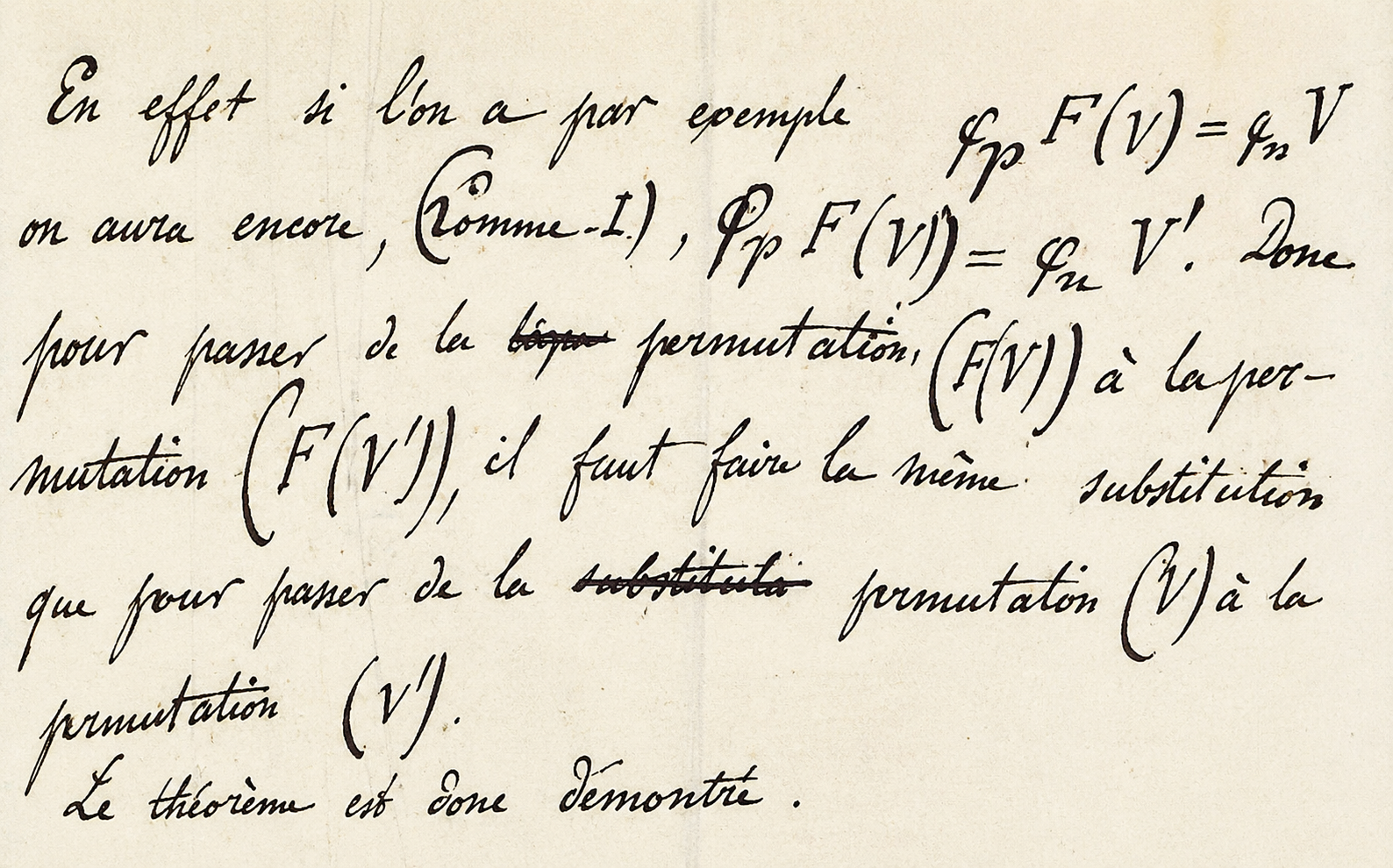}
 	\caption*{The corresponding passage from Galois' manuscript.}
 	\label{fig:plaatje}
 \end{figure}

\section{A clarification of Proposition~II in terms of \textit{groupes de permutations} and \textit{groupes de substitutions}}

\subsection{A further interpretation of Proposition~II using the notions of \textit{groupe de permutations} and \textit{groupe de substitutions}}

Let \(M\subseteq L\), and let \(m(x)\) be the minimal polynomial of \(V_1\) over \(M\), with roots
\[
\{V_1,V_2,\dots,V_m\}
=
\{V_{\pi_1},V_{\pi_2},\dots,V_{\pi_m}\}.
\]
By the definition of the polynomials \(F_j\), we have

\begin{equation}
\label{eq:ge10}
\{V_1,V_2,\dots,V_m\}
=
\{F_1(V_1),F_2(V_1),\dots,F_m(V_1)\}.
\end{equation}

Let \(V'=V_j\) in Proposition~II, where \(V'\) is a root of \(f(v,r')\). The roots of \(f(v,r')\) are therefore

\begin{equation}
\label{eq:ge11}
\{F_1(V'),F_2(V'),\dots,F_m(V')\}
=
\{F_1(V_j),F_2(V_j),\dots,F_m(V_j)\}
=
\{V_{\pi_j\pi_1},V_{\pi_j\pi_2},\dots,V_{\pi_j\pi_m}\}.
\end{equation}

After identifying the values \(V_i\) with their corresponding permutations, equation~(\ref{eq:ge11}) shows that the roots of the polynomial \(f(v,r')\) are precisely the permutations belonging to the \textbf{left coset}

\[
\{\pi_j\pi_1,\pi_j\pi_2,\dots,\pi_j\pi_m\}.
\]

By Lemma~\ref{lem:een}, this left coset is a \textit{groupe de permutations}, and the corresponding \textit{groupe de substitutions} is the group

\[
\pi_j\{\pi_1,\pi_2,\dots,\pi_m\}\pi_j^{-1}.
\]

It follows that the coefficients of the polynomial \(f(v,r')\) belong to the field \(\pi_j(M)\), which is conjugate to \(M\). In Galois' notation this field is \(\mathbb{Q}(r')\). Clearly, \(f(v,r')\) divides the minimal polynomial \(g(x)\in\mathbb{Q}[x]\) of \(V_1\) over \(\mathbb{Q}\). Proceeding in this way, one obtains a complete factorization of \(g(x)\) over the fields conjugate to \(M\). We shall shortly see how \(g(x)\) factors over \(M\) itself; this is accomplished by means of right cosets.

Galois also explains how one passes from the roots of \(f(v,r)\) to those of \(f(v,r')\), namely by means of the substitution

\[
\begin{pmatrix}
F(V)\\
F(V')
\end{pmatrix}
=
\begin{pmatrix}
V\\
V'
\end{pmatrix}.
\]

In the present situation,

\[
\begin{pmatrix}
V\\
V'
\end{pmatrix}
=
\begin{pmatrix}
V_1\\
V_j
\end{pmatrix}
=
\begin{pmatrix}
\pi_1\\
\pi_j
\end{pmatrix}.
\]

Applying this substitution to the set

\[
\{V_{\pi_1},V_{\pi_2},\dots,V_{\pi_m}\}
\]

produces, after identifying the values with their associated permutations, precisely the left coset described above.

The following proposition provides a modern formulation of the principal idea underlying Proposition~II.

For every permutation \(\sigma\in S_4\), we define \(V_\sigma\) to be the image of \(V_1\) under the substitution

\[
\begin{pmatrix}
1\\
\sigma
\end{pmatrix},
\]

that is,

\[
V_\sigma=
\begin{pmatrix}
1\\
\sigma
\end{pmatrix}(V_1).
\]

\begin{proposition}
\label{prop:nuttig}
Let \(W\) be a \textbf{\textit{groupe de permutations}}, and let \(M\) be the field corresponding to the associated \textbf{\textit{groupe de substitutions}}. Then the polynomial

\[
\prod_{\sigma\in W}\bigl(v-V_\sigma\bigr)
\]

belongs to \(M[v]\), and it is the minimal polynomial over \(M\) of each of its roots.
\end{proposition}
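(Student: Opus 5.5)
The plan is to reduce everything, via Lemma~\ref{lem:een}, to a computation with left multiplication in \(S_4\). We keep the standing assumption \([L:\mathbb{Q}]=24\), so every substitution acts on the \(V\)'s. By Lemma~\ref{lem:een}, \(W\) is a left coset \(W=aH\) of a group \(H\), and its \textit{groupe de substitutions} is \(K=aHa^{-1}\). By Lemma~\ref{lem:drie}, \(K\) is a group. The field \(M\) is the field belonging to \(K\), i.e.\ the elements of \(L\) left unchanged by every substitution in \(K\).

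Recall from the previous section that a substitution \(\begin{pmatrix}1\\ k\end{pmatrix}\) acts by left multiplication on the coordinates, sending \(V_\sigma\) to \(V_{k\sigma}\). The first step is to show that \(K\) permutes the set \(\{V_\sigma:\sigma\in W\}\). Take \(k\in K\). Then \(a^{-1}ka\in H\), and therefore
\[
kW=kaH=a\,(a^{-1}ka)\,H=aH=W .
\]
So \(k\) only permutes the factors \(v-V_\sigma\). Each coefficient of \(\prod_{\sigma\in W}(v-V_\sigma)\) is a symmetric function of the \(V_\sigma\), \(\sigma\in W\), and is therefore unchanged by every substitution of \(K\).

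The second step is to conclude that these coefficients lie in \(M\). This is the point I expect to be the main obstacle. It is exactly the statement that a rational function of the roots unchanged by all substitutions of \(K\) belongs to the field of \(K\), as in Proposition~I of the \textit{Mémoire} and its relative version over an adjoined field. To make it rigorous I would express each coefficient as a polynomial \(R(V_1)\) with \(R\in\mathbb{Q}[x]\), using the primitivity of \(V_1\). Invariance under \(K\) then says \(R(V_\kappa)=R(V_1)\) for all \(\kappa\in K\). I would then argue, in the same way as for \(m(x)\) in the previous subsection, that the \(V_\kappa\) are precisely the roots of the minimal polynomial of \(V_1\) over \(M\). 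A value of \(R\) that agrees on all these conjugates lies in \(M\), by the Lagrange-type interpolation argument used for Lemma~III; equivalently, by the standard fixed-field correspondence. If that identification is granted, as in the definition of ``the field corresponding to'' \(K\), this step is immediate.

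The third step is minimality. The \(V_\sigma\) are pairwise distinct by Lemma~II of the \textit{Mémoire}, and \(K\) acts transitively on \(W\): for \(\sigma=ah\) and \(\tau=ah'\), the element \(k=ah'h^{-1}a^{-1}\in K\) satisfies \(k\sigma=\tau\). Now let \(q\in M[v]\) have some root \(V_\sigma\). Applying the substitutions of \(K\), which fix the coefficients of \(q\) and are compatible with the algebraic relations by Lemma~I, shows that every \(V_{k\sigma}\) is also a root of \(q\), i.e.\ every \(V_\tau\) with \(\tau\in W\). Hence \(\prod_{\sigma\in W}(v-V_\sigma)\) divides \(q\). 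The product is monic, lies in \(M[v]\) and has \(V_\sigma\) as a root, so it is the minimal polynomial over \(M\) of each of its roots.
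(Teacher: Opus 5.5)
Your proof is correct. Its first half is the paper's argument made explicit. The paper simply asserts that the polynomial is invariant under the \textit{groupe de substitutions} and therefore lies in \(M[v]\), with \(M\) understood as the field of elements left fixed by \(K\) (as in the table of fields \(L^H\)). Your verification that \(kW=kaH=W\) for \(k\in K=aHa^{-1}\) supplies the step the paper leaves implicit. The detour you mention for this step, showing that the \(V_\kappa\), \(\kappa\in K\), are exactly the roots of the minimal polynomial of \(V_1\) over \(M\), is unnecessary. It is really the case \(W=K\) of the proposition itself, so it cannot replace the fixed-field definition; that definition carries the step, for you as for the paper.

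The two routes genuinely differ at minimality. The paper counts degrees. The product has degree \(|W|=|K|=[L:M]\), and since every \(V_\sigma\) generates \(L\), its minimal polynomial over \(M\) already has degree \([L:M]\). Hence the monic product in \(M[v]\) must equal it. You instead use that \(K\) acts transitively on \(W\), and that its substitutions, acting as automorphisms fixing \(M\), carry roots of any \(q\in M[v]\) to roots of \(q\). So a \(q\) vanishing at one \(V_\sigma\) vanishes at all of them and is divisible by the product.

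The paper's version is shorter but quietly uses \([L:M]=|K|\) (Artin's theorem, part of the Galois correspondence), together with primitivity of each \(V_\sigma\). Yours needs only the distinctness of the \(V_\sigma\) (Lemma~II) and the automorphism property. You correctly obtain the latter from Lemma~I under the standing assumption \([L:\mathbb{Q}]=24\). Combined with primitivity, your argument actually \emph{proves} \([L:M]=|K|\) instead of assuming it.
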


\begin{proof}[\normalfont\bfseries Proof]

The polynomial is invariant under the associated \textbf{\textit{groupe de substitutions}} and therefore belongs to \(M[v]\). Its degree is equal to \([L:M]\), the degree of the extension \(L/M\). It follows that this polynomial is the minimal polynomial over \(M\) of each of its roots.

\end{proof}

The proposition can be formulated more generally for an arbitrary \textbf{primitive element} \(\alpha\).

\begin{proposition}
\label{prop:nut}
Let \(W\) be a \textbf{\textit{groupe de permutations}}, let \(M\) be the field corresponding to the associated \textbf{\textit{groupe de substitutions}}, and let \(\alpha\) be a primitive element of \(L\). Then the polynomial

\[
\prod_{\sigma\in W}
\left(
v-
\begin{pmatrix}
	1\\
	\sigma
\end{pmatrix}(\alpha)
\right)
\]

belongs to \(M[v]\), and it is the minimal polynomial over \(M\) of each of its roots.
\end{proposition}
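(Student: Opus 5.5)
The plan is to reduce Proposition~\ref{prop:nut} to the same two facts that drove Proposition~\ref{prop:nuttig}: invariance of the polynomial under the \textit{groupe de substitutions}, and a degree count. The only new point is that the substitutions must be allowed to act on an arbitrary element of \(L\), not just on \(V_1\). So I would first record that every substitution \(\begin{pmatrix}1\\ \sigma\end{pmatrix}\) acts on \(L\) by sending \(\alpha=R(V_1)\), with \(R\in\mathbb{Q}[x]\), to \(R\bigl(\begin{pmatrix}1\\ \sigma\end{pmatrix}(V_1)\bigr)\). This is the interpretation already used in Section~5 via the polynomials \(F_i\), and it is well defined by Lemma~I of the \textit{Mémoire}: if \(R(V_1)=R'(V_1)\), then \(R-R'\) vanishes at \(V_1\), so it is divisible by \(g\) and hence vanishes at every conjugate \(V_\sigma\). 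Writing \(\alpha=R(V_1)\), the roots of the polynomial in the statement are exactly \(R(V_\sigma)\) for \(\sigma\in W\).

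\textbf{Step 1 (coefficients lie in \(M\)).} By Lemma~\ref{lem:een}, write \(W=aH\) as a left coset, so that the associated \textit{groupe de substitutions} is \(G=aHa^{-1}\). For \(\tau\in G\), applying \(\tau\) to \(V_\sigma\) with \(\sigma\in W\) gives \(V_{\tau\sigma}\), by the left-multiplication rule in Section~5. Since \(\tau aH=aH\), the map \(\sigma\mapsto\tau\sigma\) permutes \(W\). Applying \(R\) then shows that \(\tau\) permutes the roots \(R(V_\sigma)\). Hence each coefficient, being an elementary symmetric function of these roots, is fixed by \(G\) and lies in the fixed field \(M\). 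Equivalently, one could take the polynomial \(\prod_{\sigma\in W}(v-V_\sigma)\in M[v]\) from Proposition~\ref{prop:nuttig} and form \(\prod_{\sigma\in W}(v-R(V_\sigma))\) as a resultant in \(v\) and \(u\) of \(v-R(u)\) with that polynomial. This gives membership in \(M[v]\) directly, without discussing \(G\)-invariance on \(L\).

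\textbf{Step 2 (minimality).} The degree is \(|W|=|G|=[L:M]\), by the Galois correspondence used in Proposition~\ref{prop:nuttig}. Since \(\alpha\) is primitive, \(L=M(\alpha)\), so \(\alpha\) has degree \([L:M]\) over \(M\). Its minimal polynomial therefore has degree \(|W|\) and divides our polynomial, provided \(\alpha\) is a root. That is the case when \(1\in W\); in general the roots are the images of \(\alpha\) under \(W\). Each root \(\beta=R(V_\sigma)\) is itself primitive, being the image of the primitive element \(\alpha\) under a substitution, which is an injective field map of \(L\) onto \(L\). So the same degree argument applies to every root: the minimal polynomial over \(M\) of each root has degree \([L:M]\) and divides the monic polynomial of the same degree. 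Equality follows, and the roots are then automatically distinct.

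\textbf{Main obstacle.} The delicate point is justifying that substitutions act as field maps on all of \(L\), so that \(R(V_\sigma)\) is a genuine conjugate and is again primitive. I would handle this using Lemma~I, as sketched above. If needed, I would avoid the primitivity of the image by noting that \(R(V_\sigma)\) is a root of the minimal polynomial of \(\alpha\) over \(M\) conjugated appropriately. The resultant formulation of Step~1 sidesteps part of this issue.
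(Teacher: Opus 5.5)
Your overall route is the paper's. You write $\alpha=R(V_1)$, push the substitutions through with Lemma~I of the \textit{M\'emoire}, and then use invariance under the \textit{groupe de substitutions} together with the degree count $|W|=[L:M]$. Steps~1 and~2 are sound, and in places more explicit than the paper: the check $\tau aH=aH$, the resultant shortcut that reduces membership in $M[v]$ to Proposition~\ref{prop:nuttig}, and the remark that every root is again primitive, which is what ``minimal polynomial of \emph{each} of its roots'' actually requires.

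The gap is in your opening step. A substitution is a permutation of $x_1,x_2,x_3,x_4$, so $\begin{pmatrix}1\\ \sigma\end{pmatrix}(\alpha)$ means: take an expression $\alpha=\delta(x_1,x_2,x_3,x_4)$ and permute the roots in it. Applying this to the particular expression $R(ax_1+bx_2+cx_3+dx_4)$ does give $R(V_\sigma)$. Your argument ($g$ divides $R-R'$) also shows that the result does not depend on the choice of $R$. What it does not show is that an arbitrary expression $\delta$ of $\alpha$ in the roots gives the same value. That is exactly what makes $\begin{pmatrix}1\\ \sigma\end{pmatrix}(\alpha)$ well defined and equal to $R(V_\sigma)$. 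Section~5, which you cite, only settles the case $\alpha=V_j$. Calling the map $R(V_1)\mapsto R(V_\sigma)$ ``the interpretation'' therefore assumes precisely what has to be checked.

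The missing idea is the one the paper's proof rests on: the functions $\theta_k$.
\begin{itemize}
\item Since $x_k=\theta_k(V_1)$, you have $\delta(\theta_1(V_1),\dots,\theta_4(V_1))=R(V_1)$.
\item The rational function $\delta(\theta_1(v),\dots,\theta_4(v))-R(v)$ has as denominator a power of $h'(v)$, which vanishes at no $V_i$. So Lemma~I carries the vanishing from $V_1$ to $V_\sigma$.
\item Since $\theta_k(V_\sigma)$ is the root standing in position $k$ of $V_\sigma$, the left side evaluated at $V_\sigma$ is exactly $\delta$ with its roots permuted by $\begin{pmatrix}1\\ \sigma\end{pmatrix}$. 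That value is therefore $R(V_\sigma)$.
\end{itemize}
With this inserted, your proof is complete.

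One further caveat: this step, like your own well-definedness argument, needs $V_\sigma$ to be a root of $g$, i.e.\ $\sigma$ must lie in the Galois group. So ``every substitution acts on $L$'' is false in general; see $\zeta=2314$ in the paper's $x^4-2$ example. This is harmless here, because the statement tacitly takes $W$ inside the Galois group, and the paper even assumes $[L:\mathbb{Q}]=24$.
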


\begin{proof}[\normalfont\bfseries Proof]

Let

\[
\alpha=\delta(x_1,x_2,x_3,x_4)=\mathcal{F}(V_1),
\]

where \(\mathcal{F}\in\mathbb{Q}[x]\) and \(\delta\in\mathbb{Q}[x_1,x_2,x_3,x_4]\). Then

\[
\alpha=
\delta\bigl(\theta_1(V_1),\theta_2(V_1),\theta_3(V_1),\theta_4(V_1)\bigr)
=\mathcal{F}(V_1),
\]

and Lemma~I of the \textit{Mémoire} implies that

\[
\begin{pmatrix}
	V_1\\
	V_i
\end{pmatrix}
(\alpha)
=
\mathcal{F}
\left(
\begin{pmatrix}
	V_1\\
	V_i
\end{pmatrix}
(V_1)
\right).
\]

Hence,

\[
\prod_{\sigma\in W}
\left(
v-
\begin{pmatrix}
	1\\
	\sigma
\end{pmatrix}(\alpha)
\right)
=
\prod_{\sigma\in W}
\bigl(v-\mathcal{F}(V_\sigma)\bigr).
\]

This polynomial is invariant under the associated \textbf{\textit{groupe de substitutions}}, and its degree is equal to \([L:M]\), the degree of the extension \(L/M\). Here \(\alpha\) is represented by the polynomial \(\delta\) in the variables \(x_1,x_2,x_3,x_4\). If another polynomial \(\epsilon\in\mathbb{Q}[x_1,x_2,x_3,x_4]\) satisfies

\[
\delta(x_1,x_2,x_3,x_4)
=
\epsilon(x_1,x_2,x_3,x_4),
\]

then Lemma~I of the \textit{Mémoire} yields

\[
\delta(\theta_1(V_i),\theta_2(V_i),\theta_3(V_i),\theta_4(V_i))
=
\epsilon(\theta_1(V_i),\theta_2(V_i),\theta_3(V_i),\theta_4(V_i)).
\]

See also Section~\ref{sec:subaut}.

\end{proof}

\subsection{The significance of Proposition~II}

Proposition~\ref{prop:nuttig} above provides a clear understanding of the significance of Proposition~II of the \textit{Mémoire}.

Galois partitions \(S_4\) into left cosets of a group. This yields a factorization of the minimal polynomial \(g(x)\) of \(V_1\) over \(\mathbb{Q}\) into factors defined over the \underline{conjugate fields} corresponding to the groups conjugate to the given group. The \textit{groupe de substitutions} associated with a left coset of a group is precisely the corresponding conjugate group.

If, instead, one partitions \(S_4\) into right cosets of a group, one obtains a factorization of \(g(x)\) over the \underline{field corresponding to that group}, since all right cosets of a group have the same \textit{groupe de substitutions}.

The fact that both left and right cosets are \textit{groupes de permutations} plays a fundamental role in Proposition~II. The notions of \textit{groupe de permutations} and \textit{groupe de substitutions} are therefore essential to Galois' way of thinking.
 		
\section{Proposition~II illustrated by a concrete Example}
\label{sec:voorbeeld}

To illustrate Proposition~II, we consider the polynomial \(x^3-2\), whose roots are
\[
x_1=\alpha=\sqrt[3]{2},\qquad
x_2=\omega\alpha,\qquad
x_3=\omega^2\alpha,
\]
where \(\omega\) is a primitive cube root of unity. By the Fundamental Theorem of Galois Theory, we obtain the following correspondence between subgroups and intermediate fields:

\[
\begin{array}{|c|c|c|}
	\hline
	\text{Subgroup }H & L^{H} & [L^{H}:\mathbb{Q}]\\
	\hline
	S_{3} &
	\mathbb{Q} &
	1\\
	\hline
	A_{3} &
	\mathbb{Q}(\omega)=\mathbb{Q}(\sqrt{-3}) &
	2\\
	\hline
	\langle(23)\rangle &
	\mathbb{Q}\!\left(\sqrt[3]{2}\right) &
	3\\
	\hline
	\langle(13)\rangle &
	\mathbb{Q}\!\left(\omega\sqrt[3]{2}\right) &
	3\\
	\hline
	\langle(12)\rangle &
	\mathbb{Q}\!\left(\omega^{2}\sqrt[3]{2}\right) &
	3\\
	\hline
	\{e\} &
	L=\mathbb{Q}\!\left(\sqrt[3]{2},\omega\right) &
	6\\
	\hline
\end{array}
\]

\begin{enumerate}
	
	\item \textbf{Left cosets.}
	
	We consider the subgroup \(\{123,132\}\) together with its associated \textbf{left cosets}. The corresponding fixed field is
	\(\mathbb{Q}(\sqrt[3]{2})\), since this subgroup leaves \(x_1\) invariant.
	
	\begin{equation}
		\label{eq:tabel221}
		\begin{array}{|c|c|c|}
			\hline
			123 & 231 & 312\\
			132 & 213 & 321\\
			\hline
		\end{array}
	\end{equation}
	
	Although this is a very simple example, it already exhibits the essence of Proposition~II. The set
	
	\[
	\{V_1,V_2,\dots,V_6\}
	=
	\{123,132,231,213,312,321\}
	\]
	
	is partitioned according to the left cosets into the three sets
	
	\[
	W_1=\{123,132\},\qquad
	W_2=\{231,213\},\qquad
	W_3=\{312,321\}.
	\]
	
	To each \textit{groupe de permutations} \(W_i\) we associate the polynomial
	
	\[
	\prod_{\sigma\in W_i}\bigl(v-\sigma(V_1)\bigr),
	\]
	
	where
	
	\[
	V_1=x_1+3x_2+5x_3.
	\]
	
	With the coefficients \(1,3,\) and \(5\), the six permutations of \(V_1\) are pairwise distinct. By Proposition~II, the coefficients of these polynomials belong to the fields corresponding to the associated \textit{groupes de substitutions}. The sets \(W_1,W_2,\) and \(W_3\) yield the following polynomials:
	
	\[
	\begin{aligned}
		\prod_{s\in\{123,132\}}\bigl(v-s(V_1)\bigr)
		&=
		v^2+6\sqrt[3]{2}\,v+12\sqrt[3]{4},\\
		\prod_{s\in\{231,213\}}\bigl(v-s(V_1)\bigr)
		&=
		v^2+6\omega\sqrt[3]{2}\,v+12\omega^2\sqrt[3]{4},\\
		\prod_{s\in\{312,321\}}\bigl(v-s(V_1)\bigr)
		&=
		v^2+6\omega^2\sqrt[3]{2}\,v+12\omega\sqrt[3]{4}.
	\end{aligned}
	\]
	
	The coefficients of these three polynomials belong, respectively, to
	
	\[
	\mathbb{Q}(\alpha),\qquad
	\mathbb{Q}(\omega\alpha),\qquad
	\mathbb{Q}(\omega^2\alpha).
	\]
	
	The essential point is that the corresponding \textit{groupes de substitutions} are
	
	\[
	\{123,132\},\qquad
	\{123,321\},\qquad
	\{123,213\}.
	\]
	
	These three groups are conjugate to one another and therefore correspond to the three conjugate fields. This is precisely the phenomenon described by Galois in Proposition~II: the minimal polynomial of \(V_1\) over \(\mathbb{Q}\) factors into polynomials whose coefficients belong to mutually conjugate intermediate fields.
	
	\item \textbf{Right cosets.}
	
	For right cosets, a different phenomenon occurs. The resulting polynomials all have their coefficients in the fixed field of the subgroup with which one starts. We again consider the subgroup
	
	\[
	H=\{123,132\},
	\]
	
	together with its associated \textbf{right cosets}. The corresponding fixed field is
	
	\[
	\mathbb{Q}(\sqrt[3]{2}),
	\]
	
	since \(H\) fixes \(x_1\). Observe that these right cosets are likewise \textit{groupes de permutations}. The crucial distinction is that the associated \textit{groupes de substitutions} are all equal to the subgroup \(H\). Consequently, all coefficients lie in the same intermediate field \(\mathbb{Q}(\sqrt[3]{2})\).
	
	\begin{equation}
		\label{eq:tabel2222}
		\begin{array}{|c|c|c|}
			\hline
			123 & 231 & 312\\
			132 & 321 & 213\\
			\hline
		\end{array}
	\end{equation}
	
	This yields the following polynomials, all belonging to
	\(\mathbb{Q}(\sqrt[3]{2})[v]\):
	
	\[
	\begin{aligned}
		\prod_{s\in\{123,132\}}\bigl(v-s(V_1)\bigr)
		&=
		v^2+6\sqrt[3]{2}\,v+12\sqrt[3]{4},\\[1ex]
		\prod_{s\in\{231,321\}}\bigl(v-s(V_1)\bigr)
		&=
		v^2-6\sqrt[3]{2}\,v+12\sqrt[3]{4},\\[1ex]
		\prod_{s\in\{312,213\}}\bigl(v-s(V_1)\bigr)
		&=
		v^2+12\sqrt[3]{4}.
	\end{aligned}
	\]
	
	Multiplying these three factors gives
	
	\[
	\begin{aligned}
		&(v^2+6\sqrt[3]{2}\,v+12\sqrt[3]{4})
		(v^2-6\sqrt[3]{2}\,v+12\sqrt[3]{4})
		(v^2+12\sqrt[3]{4})\\
		&\qquad
		=
		v^6+6912.
	\end{aligned}
	\]
	
	The polynomial on the right-hand side is the minimal polynomial of \(V_1\) over
	\(\mathbb{Q}\).
	
	\item \textbf{The alternating group \(A_3\).}
	
	Finally, we consider the normal subgroup
	
	\[
	A_3=\{123,231,312\},
	\]
	
	whose corresponding fixed field is \(\mathbb{Q}(\omega)\). Since \(A_3\) is a normal subgroup of \(S_3\), it suffices to consider its left cosets.
	
	\begin{equation}
		\label{eq:tabel2223}
		\begin{array}{|c|c|}
			\hline
			123 & 132\\
			231 & 321\\
			312 & 213\\
			\hline
		\end{array}
	\end{equation}
	
	\[
	\begin{array}{|c|c|}
		\hline
		W &
		\displaystyle\prod_{s\in W}\bigl(v-s(V_1)\bigr)\\
		\hline
		\{123,231,312\}
		&
		v^3+48\sqrt{-3}
		\\
		\hline
		\{132,321,213\}
		&
		v^3-48\sqrt{-3}
		\\
		\hline
	\end{array}
	\]
	
	Their product is again
	
	\[
	\bigl(v^3+48\sqrt{-3}\bigr)
	\bigl(v^3-48\sqrt{-3}\bigr)
	=
	v^6+6912.
	\]
	
	\item \textbf{Conclusion.}
	
	This example illustrates the essence of Proposition~II.
	
	Galois describes how the minimal polynomial of a primitive element over the ground field factors when one passes to an intermediate field. The conjugates of the primitive element are partitioned according to the cosets of a subgroup.
	
	For left cosets, the resulting factors have coefficients belonging to mutually conjugate intermediate fields. For right cosets, all factors have coefficients in the same fixed field \(L^H\). When \(H\) is normal, these two descriptions coincide.
	
\end{enumerate}

\section{The closure of the substitutions of the Galois group}

We present two proofs. The second is based on Proposition~II.

\begin{enumerate}
	
	\item
	
	Let \(\{V_1,V_2,\dots,V_m\}\) be the distinct roots of \(m(x)\), the minimal polynomial of \(V_1\) over \(M\), where
	
	\[
	V_1=ax_1+bx_2+cx_3+dx_4.
	\]
	
	Suppose that, for a fixed \(1\le i\le m\),
	
	\[
	V_i=F_i(V_1),
	\]
	
	where \(F_i\in\mathbb{Q}[x]\).
	
	By Lemma~I of the \textit{Mémoire}, \(F_i(V_j)\) is likewise a root of \(m(x)\) for every \(1\le i,j\le m\). Consequently, for every \(1\le i,j\le m\), the following sets are equal:
	
	\begin{equation}
		\label{eq:gel}
		\{F_i(V_1),F_i(V_2),\dots,F_i(V_m)\}
		=
		\{V_1,V_2,\dots,V_m\}
		=
		\{F_1(V_j),F_2(V_j),\dots,F_m(V_j)\}.
	\end{equation}
	
	The roots occurring in each of these sets are distinct, since
	\(F_j(V_i)=V_{p_ip_j}\) and the values
	\(V_1,V_2,\dots,V_{24}\) are pairwise distinct.
	
	The root \(F_iF_j(V_1)\) of \(m(x)\) is equal to \(V_{p_jp_i}\). On the other hand, by the right-hand equality in (\ref{eq:gel}), there exists an index \(1\le s\le m\) such that
	
	\[
	F_iF_j(V_1)=F_s(V_1).
	\]
	
	It follows that
	
	\[
	p_jp_i=p_s.
	\]
	
	Hence the set of permutations
	
	\[
	\{p_1,p_2,\dots,p_m\}
	\]
	
	forms a group and is therefore a \textit{groupe de permutations}. Consequently, the associated set of substitutions is a \textit{groupe de substitutions}, which, by Lemma~\ref{lem:drie}, is itself a group.
	
	\item
	
	Alternatively, let \(V'\) in Proposition~II be the root
	
	\[
	V_j=F_j(V_1)
	\]
	
	of \(f(v,r)\). Proposition~II then implies that \(F_iF_j(V_1)\) is also a root of
	
	\[
	m(x)=f(v,r).
	\]
	
	The remainder of the proof is identical to the preceding argument.
	
\end{enumerate}

 \section{A simple derivation of the functions \(\theta_1,\theta_2,\theta_3,\theta_4\), and a comparison with the corresponding functions introduced by Adolf Hurwitz in his 1909 notebook}
 
 Each value \(V_i\) is associated with a permutation
 \(\pi_i=i_1i_2i_3i_4\), where
 
 \[
 V_i=ax_{i_1}+bx_{i_2}+cx_{i_3}+dx_{i_4}.
 \]
 
 For \(s=1,2,3,4\), the quantity \(x_{\pi_i(s)}\) denotes the root occurring in \(V_i\) with coefficient \(a,b,c,\) and \(d\), respectively. We define the polynomial
 
 \[
 P_j(v)
 =
 \prod_{i\neq j}(v-V_i),
 \]
 
 associated with \(V_j\). It satisfies, by means of the Kronecker delta,
 
 \[
 P_i(V_j)=\delta_{ij}\,P_i(V_i).
 \]
 
 We now define the functions \(\theta_s(v)\) by
 
 \begin{equation}
 	\label{eq:theta}
 	\theta_s(v)=
 	\frac{
 		x_{\pi_1(s)}P_1(v)+x_{\pi_2(s)}P_2(v)+\cdots+x_{\pi_{24}(s)}P_{24}(v)
 	}
 	{
 		P_1(v)+P_2(v)+\cdots+P_{24}(v)
 	}.
 \end{equation}
 
 Substituting \(v=V_i\) immediately yields
 
 \[
 \theta_s(V_i)=x_{\pi_i(s)},
 \qquad
 i=1,\dots,24,\quad s=1,\dots,4,
 \]
 
 since
 
 \[
 P_i(V_j)=\delta_{ij}P_i(V_i).
 \]
 
 It remains to prove that both the numerator and the denominator belong to \(\mathbb{Q}[v]\). Define the polynomial
 
 \begin{equation}
 	\label{eq:product}
 	h(v)=\prod_{i=1}^{24}(v-V_i).
 \end{equation}
 
 The polynomial~\eqref{eq:product} belongs to \(\mathbb{Q}[v]\) because it is symmetric in \(x_1,x_2,x_3,\) and \(x_4\). Its derivative also belongs to \(\mathbb{Q}[v]\), and this derivative is precisely the denominator of formula~\eqref{eq:theta}.
 
 Although \(a,b,c,\) and \(d\) are fixed integers, we temporarily regard them as independent real variables. Formula~\eqref{eq:product} then becomes a polynomial in the variables \(a,b,c,d,\) and \(v\). Differentiating it partially with respect to \(a,b,c,\) and \(d\) yields, up to a minus sign, exactly the numerators of formula~\eqref{eq:theta} for \(s=1,2,3,\) and \(4\). After substituting the chosen integer values of \(a,b,c,\) and \(d\), one again obtains a polynomial in \(\mathbb{Q}[v]\).
 
 It is remarkable that Adolf Hurwitz employs precisely this method in his proof of the Fundamental Theorem of Galois Theory; see his 1909 notebook \cite[pp.~153--154]{Hurdag}. It is important that all four functions \(\theta_s(v)\) have the same denominator in formula~\eqref{eq:theta}, since this is essential when Galois applies Lemma~I to formulas involving the functions \(\theta_s(v)\).
 
 One may also prove that the numerator and denominator of~\eqref{eq:theta} belong to \(\mathbb{Q}[v]\) by showing that they remain invariant under the transposition of two roots \(x_i\) and \(x_j\). We shall carry out this argument for the numerator of \(\theta_1(v)\); the preceding proof based on differentiating~\eqref{eq:product} is, of course, entirely valid. The proofs for \(\theta_2(v)\), \(\theta_3(v)\), and \(\theta_4(v)\) are analogous.
 
 Partition the \(24\) values \(V_i\) as follows. Let \(W_1\) denote the set of indices corresponding to values of the form \(ax_1+\cdots\); there are six such values. Similarly, let \(W_2\) denote the set of indices corresponding to values of the form \(ax_2+\cdots\), again consisting of six elements, and so on.
 
 Since
 
 \[
 P_j(v)
 =
 \prod_{i\neq j}(v-V_i),
 \]
 
 the numerator becomes
 
 \[
 x_1\!\left(\sum_{i\in W_1}P_i\right)
 +
 x_2\!\left(\sum_{i\in W_2}P_i\right)
 +
 x_3\!\left(\sum_{i\in W_3}P_i\right)
 +
 x_4\!\left(\sum_{i\in W_4}P_i\right).
 \]
 
 If one interchanges \(x_i\) and \(x_j\), then, apart from the exchange of \(x_i\) and \(x_j\), only the sets \(W_i\) and \(W_j\) are interchanged, so that the above sum remains unchanged. Since every permutation is a product of transpositions, it follows that the numerator is invariant under every permutation of the roots and therefore belongs to \(\mathbb{Q}[v]\).
 
 \section{The substitutions of the Galois group and automorphisms}
 \label{sec:subaut}
 
 We now prove that the substitutions belonging to the Galois group are in fact automorphisms. The setting is again
  \(
 \mathbb{Q}\subseteq\mathbb{Q}(x_1,x_2,x_3,x_4).
 \)
  Let
  \(
 \alpha=\delta(x_1,x_2,x_3,x_4)=\mathcal{F}(V_1),
 \) 
 where \(\mathcal{F}\in\mathbb{Q}[x]\) and
 \(\delta\in\mathbb{Q}[x_1,x_2,x_3,x_4]\). Then
  \[
 \alpha
 =
 \delta\!\bigl(\theta_1(V_1),\theta_2(V_1),
 \theta_3(V_1),\theta_4(V_1)\bigr)
 =
 \mathcal{F}(V_1),
 \]
 
 and therefore, by Lemma~I of the \textit{Mémoire},
  \(
 \sigma_i(\alpha)
 =
 \mathcal{F}(\sigma_i(V_1)),
 \)
  for \(i=1,2,\dots,m\), where \(g(x)\) is the minimal polynomial of \(V_1\) over \(\mathbb{Q}\), and where \(\sigma_i\) denotes the substitution
  \(
 \begin{pmatrix}
 	V_1\\
 	V_i
 \end{pmatrix}.
 \)
 
 \begin{enumerate}
 	
 	\item \textbf{Independence of the representation.}
 	
 	Suppose that
 	 	\(
 	\alpha
 	=
 	\epsilon(x_1,x_2,x_3,x_4)
 	=
 	\mathcal{G}(V_1),
 	\)
 	 	where
 	\(\mathcal{G}\in\mathbb{Q}[x]\) and
 	\(\epsilon\in\mathbb{Q}[x_1,x_2,x_3,x_4]\). Then
 	 	\(
 	\mathcal{F}(V_1)=\mathcal{G}(V_1).
 	\)
 	Lemma~I of the \textit{Mémoire} therefore implies
 	\(
 	\mathcal{F}(V_i)=\mathcal{G}(V_i),
 		i=1,\dots,m.
 	\)
 	
 	Hence the image of \(\alpha\) under \(\sigma_i\) is independent of the particular functional expression chosen for \(\alpha\) in terms of \(x_1,x_2,x_3,x_4\).
 	
 	\item \textbf{Preservation of addition and multiplication.}
 	
 	Let
 	\(
 	\alpha
 	=
 	\delta(x_1,x_2,x_3,x_4)
 	=
 	\mathcal{F}(V_1),
 	\)
 	 	and
 	 	\(
 	\beta
 	=
 	\epsilon(x_1,x_2,x_3,x_4)
 	=
 	\mathcal{G}(V_1).
 	\)
 	 	Then
 	
 	\[
 	\sigma_i(\alpha+\beta)
 	=
 	\mathcal{F}(\sigma_i(V_1))
 	+
 	\mathcal{G}(\sigma_i(V_1))
 	=
 	\sigma_i(\alpha)
 	+
 	\sigma_i(\beta),
 	\]
 	
 	for \(i=1,\dots,m\). The same argument establishes preservation of multiplication.
 	
 	\item \textbf{Injectivity and surjectivity.}
 	
 	Every substitution \(\sigma_i\) possesses an inverse. Hence every \(\sigma_i\) is both injective and surjective.
 	
 \end{enumerate}

  The converse also holds. Suppose that the substitution
  \(
 \begin{pmatrix}
 	V_1\\
 	V_i
 \end{pmatrix}
 \) 
 is an automorphism. Then, by Proposition~\ref{prop:nuttig}, \(V_i\) must be a root of the minimal polynomial of \(V_1\) over \(\mathbb{Q}\). We therefore obtain the following proposition, which establishes the connection with the modern formulation of Galois theory in terms of automorphisms.
 
 \begin{proposition}
 	\label{prop:auto}
 	The substitutions that are automorphisms are precisely the substitutions belonging to the Galois group.
 \end{proposition}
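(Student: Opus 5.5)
The proof splits into two inclusions. The first direction, that every substitution of the Galois group is an automorphism, has essentially been carried out just before the statement. The converse rests on Proposition~\ref{prop:nuttig} together with the standard fact that an automorphism of \(L\) fixing \(\mathbb{Q}\) sends a root of a rational polynomial to a root of the same polynomial.

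\emph{Galois group \(\Rightarrow\) automorphism.} Fix \(\sigma_i=\begin{pmatrix}V_1\\ V_i\end{pmatrix}\) with \(V_i\) a root of \(g(x)\). Every \(\alpha\in L\) can be written as \(\alpha=\delta(x_1,\dots,x_4)=\mathcal{F}(V_1)\) with \(\mathcal{F}\in\mathbb{Q}[x]\), and via the functions \(\theta_s\) and Lemma~I we get \(\sigma_i(\alpha)=\mathcal{F}(V_i)\).

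The three items listed above then finish this direction:
\begin{itemize}
\item \emph{Well-definedness.} If \(\mathcal{F}(V_1)=\mathcal{G}(V_1)\), then \(g\mid \mathcal{F}-\mathcal{G}\), so \(\mathcal{F}(V_i)=\mathcal{G}(V_i)\).
\item \emph{Additivity and multiplicativity.} Since \(\mathcal{F}\mapsto\mathcal{F}(V_i)\) is a ring homomorphism \(\mathbb{Q}[x]\to L\), the map \(\sigma_i\) respects sums and products.
\item \emph{Bijectivity.} By Lemma~\ref{lem:drie} the substitutions form a group, so \(\sigma_i\) has an inverse \(\sigma_t\) in the Galois group. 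Alternatively, \(\sigma_i\) is an injective \(\mathbb{Q}\)-linear map of the finite-dimensional space \(L\), hence surjective.
\end{itemize}
I would add one remark: \(\sigma_i\) defined through \(\mathcal{F}\) agrees with the permutation action on the roots. Indeed, \(\sigma_i(x_s)=\theta_s(V_i)\) is the coordinate of \(V_i\), which is exactly what the substitution prescribes. So the automorphism really is the substitution, not merely some map attached to it.

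\emph{Automorphism \(\Rightarrow\) Galois group.} Let \(\tau=\begin{pmatrix}V_1\\ V_i\end{pmatrix}\) be a substitution of the roots that extends to a field automorphism of \(L\). Its fixed field contains \(\mathbb{Q}\), since rational numbers are not changed by permuting the \(x_s\). Applying \(\tau\) to the relation \(g(V_1)=0\), whose coefficients are rational, gives \(g(\tau(V_1))=g(V_i)=0\). Hence \(V_i\) is a root of the minimal polynomial of \(V_1\) over \(\mathbb{Q}\), which by definition means \(\tau\) lies in the Galois group.

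To stay in the paper's language, this direction can also be phrased with Proposition~\ref{prop:nuttig} applied to the group \(W\) of Galois-group permutations with \(M=\mathbb{Q}\). The roots of \(\prod_{\sigma\in W}(v-V_\sigma)=g(v)\) are exactly the \(V_\sigma\), \(\sigma\in W\), so \(V_i\) lies in that set.

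\emph{Main obstacle.} The delicate step is the well-definedness and the identification just remarked on. A substitution is a priori only a relabelling of symbols; one must check that the value \(\mathcal{F}(V_i)\) coincides with \(\delta\) evaluated at the permuted roots, for every polynomial expression \(\delta\). This is where the common denominator of the \(\theta_s\) and Lemma~I are essential. The relation \(\delta(\theta_1(v),\dots,\theta_4(v))-\mathcal{F}(v)\) vanishes at \(V_1\), so its numerator is divisible by \(g\). It therefore vanishes at every root \(V_i\) of \(g\), giving \(\delta(x_{\pi_i(1)},\dots,x_{\pi_i(4)})=\mathcal{F}(V_i)\). Once this is established, both inclusions are formal.
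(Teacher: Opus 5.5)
Your proof is correct and follows the paper's argument. The forward direction matches the paper's three items: independence of the representation via Lemma~I, preservation of sums and products, and invertibility. The converse is the paper's appeal to Proposition~\ref{prop:nuttig} that \(V_i\) must be a root of \(g\); your explicit step \(g(\tau(V_1))=\tau(g(V_1))=0\) and your check that \(\sigma_i(x_s)=\theta_s(V_i)\) just spell out what the paper leaves implicit.
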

 
 We conclude with an example showing that not every substitution is an automorphism. Consider the splitting field of \(x^4-2\) over \(\mathbb{Q}\).
  Let
  \(
 L=\mathbb{Q}(\alpha,i),
 \qquad
 \alpha=\sqrt[4]{2},
 \)
  whose roots are
  \(
 x_1=\alpha,\quad
 x_2=-\alpha,\quad
 x_3=i\alpha,\quad
 x_4=-i\alpha.
 \) 
 Define
  \(
 \sigma(\alpha)=i\alpha,
 \qquad
 \sigma(i)=i,
 \)
  and
  \(
 \tau(\alpha)=\alpha,
 \quad
 \tau(i)=-i.
 \)
  Then
  \[
 \operatorname{Gal}(L/\mathbb{Q})
 =
 \{
 1,\sigma,\sigma^2,\sigma^3,
 \tau,\tau\sigma,\tau\sigma^2,\tau\sigma^3
 \}.
 \]
 
 Here we adopt the convention that, in the composition \(\tau\sigma\), the automorphism \(\sigma\) is applied first and then \(\tau\). The multiplication table is therefore represented by
 
 \[
 \begin{array}{|c|c|c|c|c|c|}
 	\hline
 	\text{Automorphism}
 	&\multicolumn{4}{c|}{\text{Substitution}}
 	&\text{One-line notation}\\
 	\hline
 	1
 	&x_1&x_2&x_3&x_4
 	&1234\\
 	\hline
 	\sigma
 	&x_3&x_4&x_2&x_1
 	&3421\\
 	\hline
 	\sigma^2
 	&x_2&x_1&x_4&x_3
 	&2143\\
 	\hline
 	\sigma^3
 	&x_4&x_3&x_1&x_2
 	&4312\\
 	\hline
 	\tau
 	&x_1&x_2&x_4&x_3
 	&1243\\
 	\hline
 	\tau\sigma
 	&x_4&x_3&x_2&x_1
 	&4321\\
 	\hline
 	\tau\sigma^2
 	&x_2&x_1&x_3&x_4
 	&2134\\
 	\hline
 	\tau\sigma^3
 	&x_3&x_4&x_1&x_2
 	&3412\\
 	\hline
 \end{array}
 \]
 
 Thus, viewed as a permutation group on the four roots,
 
 \[
 \operatorname{Gal}(L/\mathbb{Q})
 =
 \{
 1234,\,
 3421,\,
 2143,\,
 4312,\,
 1243,\,
 4321,\,
 2134,\,
 3412
 \}.
 \]
 
 In terms of \(\alpha\), the images are, for example,
 
 \[
 \sigma:
 (\alpha,-\alpha,i\alpha,-i\alpha)
 \longmapsto
 (i\alpha,-i\alpha,-\alpha,\alpha),
 \]
 
 and
 
 \[
 \tau:
 (\alpha,-\alpha,i\alpha,-i\alpha)
 \longmapsto
 (\alpha,-\alpha,-i\alpha,i\alpha).
 \]
 
 These eight automorphisms form the dihedral group
  \(
 \operatorname{Gal}(L/\mathbb{Q})
 \cong
 D_4.
 \)
  Now consider the substitutions
 \(\zeta=2314\) and
 \(\eta=3214\).
 Then
  \(
 \zeta(x_1+x_2)\neq0,
 \)
  although \(x_1+x_2=0\), while
  \(
 \eta(x_2+x_3)=0,
 \)
  although \(x_2+x_3\neq0\).
  The same is true for the substitutions belonging to the two corresponding right cosets of
 \(\operatorname{Gal}(L/\mathbb{Q})\); these are likewise not automorphisms.

\end{document}